\newtheorem{theorem}{Theorem}[section]
\newtheorem{lemma}[theorem]{Lemma}
\newtheorem{corollary}[theorem]{Corollary}
\theoremstyle{definition}
\theoremstyle{remark}
\numberwithin{equation}{section}
\def\bfR{\mbox{\boldmath$R$}}
\begin{document}

\title{
Global dynamics of the chemostat with variable yields}
\author{Tewfik Sari}
\address{Laboratoire de Math\'ematiques, Informatique et Applications, 
Universit\'e de Haute Alsace, 4 rue des fr\`eres Lumi\`ere, 68093 Mulhouse, and 
EPI MERE INRIA-INRA, UMR MISTEA, 
2, pl. Viala, 34060 Montpellier, France}
\email{Tewfik.Sari@uha.fr}

\date{\today}

\begin{abstract}
In this paper, we consider a competition model between $n$ species in a chemostat including 
both monotone and non-monotone response functions, distinct removal rates and variable yields.
We show that only the species with the lowest break-even concentration survives, provided that additional technical 
conditions on the growth functions and yields are satisfied. LaSalle's extension theorem of the Lyapunov stability theory
is the main tool.
\end{abstract}

\keywords 
{chemostat, competitive exclusion principle, Lyapunov function, global asymptotic stability, variable yield model}

\subjclass[2000]{92A15, 92A17, 34C15, 34C35.}

\pagestyle{myheadings}
\thispagestyle{plain}
\markboth{T. SARI AND G.S.K. WOLKOWICZ}{COMPETITION IN THE CHEMOSTAT}

\maketitle

\section{Introduction}
In this paper we study the global dynamics of the following model of the chemostat in which $n$ populations of microorganisms compete for a single growth-limiting substrate:
\begin{equation}\label{eqsxi}
\begin{array}{lcll} 
 \displaystyle S'(t)& =&D[S^0-S(t)]-
\sum_{i=1}^nf_i(S(t))x_i(t)&\\[2mm]
 \displaystyle x'_i(t)&=& [p_i(S(t)) - D_i]x_i(t),& i=1\cdots n,
\end{array}
\end{equation}
where $S(0)\geq 0$ and $x_i(0)>0$, $i=1\cdots n$ and $S^0$, $D$ and $D_i$ are positive constants.
In these equations, $S(t)$ denotes the concentration of the substrate at time $t$; $x_i(t)$ denotes the concentration of the $i$th population of microorganisms at time $t$; $f_i(S)$ represents the uptake rate of substrate of the $i$th population;
$p_i(S)$ represents the per-capita growth rate of the $i$th population and so 
the function $y_i(S)$ defined by 
$$y_i(S)=\frac{p_i(S)}{f_i(S)}$$
is the growth yield; 
$S^0$ and $D$ denote, respectively, the concentration of substrate in the feed bottle and the flow rate of the chemostat; each 
$D_i$ represents the removal rate of the $i$th population. 
We make the following assumptions on the functions $p_i$ and $f_i$:
\begin{itemize}
 \item 
$p_i,f_i:\bfR_+\to\bfR_+$ are continuous,
\item
$p_i(0)=f_i(0)=0$  and for all $S>0$, $p_i(S)>0$ and $f_i(S)>0$,
\end{itemize}
For general background on model (\ref{eqsxi}), in the constant yield case $y_i(S)=Y_i$, 
the reader is referred to the monograph of Smith and Waltman \cite{chem}.
Following Butler and Wolkowicz \cite{bw}, 
we make the following assumptions on the form of the response functions $p_i$:
there exist positive extended real numbers $\lambda_i$ and $\mu_i$ with 
$\lambda_i\leq \mu_i\leq +\infty$ such that
$$
\begin{array}{lcl}
p_i(S)<D_i&\mbox{ if }&S\notin[\lambda_i,\mu_i]\\
p_i(S)>D_i&\mbox{ if }&S\in]\lambda_i,\mu_i[.
\end{array}
$$
Hence there are at most two values of $S$, $S=\lambda_i$ and $S=\mu_i$, called the break-even concentrations,
satisfying the equation $p_i(S)=D_i$. 
We adopt the convention $\mu_i= \infty$ if this equation has only one solution and $\lambda_i= \infty$ if 
it has no solution.

The global analysis of this model was considered by Hsu, Hubbell and Waltman \cite{hhw}, 
in the Monod case \cite{monod} when the response functions are of Michaelis-Menten form, 
\begin{equation}\label{MichaelisMenten}
p_i(S)=\frac{a_iS}{b_i+S},
\end{equation}
and the yields are constant $y_i(S)=Y_i$, and $D_i=D$ for $i=1\cdots n$.
The authors showed that only the species with the lowest break-even concentration survives. 
Thus the competitive exclusion principle (CEP) holds: only one species survives, namely 
the species which makes optimal use of the resources.
Hsu \cite{hsu} applied a Lyapunov-LaSalle argument 
to give a simple and elegant proof of the result in \cite{hhw} for the case of different removal rates $D_i$. 
The Lyapunov function $V_H$ used by Hsu is
\begin{equation}\label{HSU}
V_{H}=
\int_{\lambda_1}^S\frac{\sigma-\lambda_1}{\sigma}d\sigma+
c_1\int_{x_{1}^*}^{x_1}\frac{\xi-x_{1}^*}{\xi}d\xi+
\sum_{i=2}^nc_ix_i,
\end{equation}
where 
$$\quad c_i=\frac{1}{Y_i}\frac{a_i}{a_i-D_i},
\quad i=1\cdots n,\quad\mbox{ and }\quad x_{1}^*=DY_1\frac{S^0-\lambda_1}{D_1}.$$

CEP has been proved under a variety of hypothesis \cite{amg,bw,li,wl,wx}. For a survey 
on the contribution of each paper the reader may consult the introduction of the papers \cite{LLS,li,SM}.
The variable yield case was considered, for $n=1$ and $n=2$ by Pilyugin and Waltman \cite{PW}, with a particular interest to linear and quadratic yields, and by 
Huang, Zhu and Chang \cite{HZC}.
The model (\ref{eqsxi}) was considered by Arino, Pilyugin and Wolkowicz \cite{apw} and  Sari and Mazenc \cite{SM}. 

In this paper we generalize \cite{wl} by allowing variable yields and we generalize \cite{apw} by allowing multi species. We further extend \cite{SM} by providing less restrictive assumptions on the system.
For biological motivations concerning the dependance of the yields on the substrate, the reader is refeered to \cite{apw,PW} and the references therein.

\section{Analysis of the model}
It is known (see Theorem 4.1 \cite{apw}) that the non-negative cone is invariant under the flow of (\ref{eqsxi}) 
and all solutions are defined and remain bounded for all $t\geq 0$.
System (\ref{eqsxi}) can have many equilibria: the washout equilibrium
$E_0=(S^0,0,\cdots,0)$, which is locally exponentially stable if and only if for all $i=1\cdots n$, $S^0\notin[\lambda_i,\mu_i]$
and the equilibria 
$E_{i}^*$ and $E_{i}^{**}$
where all component of $E_{i}^*$ and $E_{i}^{**}$ vanish except for the first and the $(i+1)$th, which are
$$S=\lambda_i,\qquad x_i=x_{i}^*:=F_i(\lambda_i),\quad\mbox{ for }E_{i}^*$$
and
$$S=\mu_i,\qquad x_i=x_{i}^{**}:=F_i(\mu_i),\quad\mbox{ for }E_{i}^{**}$$
respectively, where
$$
F_i(S)=D\displaystyle\frac{S^0-S}{f_i(S)}.
$$
The equilibrium $E_{i}^*$ lies in the non-negative cone if and only 
if $\lambda_i\leq S^0$. If  
$\lambda_i<\lambda_j$ for all $i\neq j$ and $F_i'(\lambda_i)<0$ then it is locally exponentially stable. 
It coalesces with $E_0$ when $\lambda_i=S^0$. 
The equilibrium $E_{i}^{**}$ lies in the non-negative cone if and only if $\mu_i\leq S^0$ 
and is locally exponentially unstable if it exists.
Its coalesces with $E_0$ when $\mu_i=S^0$. 
Besides these equilibria, the system (\ref{eqsxi}) can have a continuous
set of non-isolated equilibria in the non-generic cases where two or more of the break-even concentrations
are equal. 
In what follows we assume, that 
\begin{equation}\label{condition}
\lambda_1<\lambda_2\leq\cdots\leq\lambda_n,\qquad\mbox{and}\qquad\lambda_1<S^0<\mu_1.
\end{equation}
Hence $E_0$ is locally exponentially unstable and the equilibrium $E_{1}^*$ lies in 
the non-negative cone. It is 
locally exponentially stable if and only if
\begin{equation}\label{locexpstab}
F_1'(\lambda_1)<0\Longleftrightarrow f_1(\lambda_1)+f_1'(\lambda_1)(S^0-\lambda_1)>0.
\end{equation}
We consider the global asymptotic stability of $E_{1}^*$.

Before presenting the results, we need the following lemma,
\begin{lemma}\label{SppS0}
The solutions $S(t)$, $x_i(t)$, $i=1\cdots n$ of (\ref{eqsxi}) with positive initial conditions are 
positive and bounded, and if $\lambda_i<S^0<\mu_i$ for some $i=1\cdots n$, then $S(t)<S^0$ for all sufficiently large $t$.
\end{lemma}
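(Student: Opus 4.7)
The plan is to handle the three claims in sequence. Positivity of $x_i$ is immediate from variation of parameters for the linear (in $x_i$) equation $x_i' = (p_i(S) - D_i) x_i$, which gives $x_i(t) = x_i(0)\exp\int_0^t(p_i(S(s))-D_i)\,ds > 0$ whenever $x_i(0) > 0$. Positivity of $S$ follows from the boundary computation: since $f_i(0)=0$ for each $i$, the vector field on $\{S=0\}$ satisfies $S'|_{S=0} = DS^0 > 0$, so a trajectory with $S(0)>0$ cannot reach the face $S=0$ in forward time. Boundedness has been cited from Theorem 4.1 of \cite{apw}, so it can be invoked directly.

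For the final assertion, I would start with the differential inequality $S' \leq D(S^0 - S)$, which by the standard comparison principle yields $S(t) \leq \max(S(0),S^0)$ for all $t \geq 0$ and $\limsup_{t\to\infty} S(t) \leq S^0$. Next I would verify that the open set $\{S < S^0\}$ is forward invariant: at any time $t_0$ on a trajectory with $S(t_0) = S^0$, one has $S'(t_0) = -\sum_j f_j(S^0)x_j(t_0) < 0$, since every $x_j(t_0)>0$ by the positivity just established and each $f_j(S^0)>0$. This strict inequality rules out any upward crossing of the level $S = S^0$. In the case $S(0) \leq S^0$, the same boundary inequality together with invariance gives $S(t) < S^0$ for all $t > 0$, which proves the claim.

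The remaining and main obstacle is the case $S(0) > S^0$: one must exclude the possibility that the trajectory stays strictly above $S^0$ for all time, monotonically approaching $S^0$ from above. Here I would invoke the hypothesis $\lambda_i < S^0 < \mu_i$ for the first time. By continuity of $p_i$ and the sign condition $p_i(S)>D_i$ on $(\lambda_i,\mu_i)$, one can select $\epsilon>0$ small enough that $[S^0,S^0+\epsilon] \subset (\lambda_i,\mu_i)$ and $p_i(S) - D_i \geq \eta$ on this interval for some $\eta>0$. Since $\limsup S(t) \leq S^0$, in the putative scenario $S(t) \in (S^0,S^0+\epsilon]$ for all sufficiently large $t$, which forces $x_i'(t) \geq \eta\,x_i(t)$ and hence $x_i(t) \to \infty$ exponentially, contradicting the boundedness from the first step. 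Therefore $S$ must cross $S^0$ in finite time, and forward invariance of $\{S<S^0\}$ traps the trajectory below $S^0$ thereafter.
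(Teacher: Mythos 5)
Your proof is correct and complete. The paper itself gives no details for this lemma, deferring entirely to Lemma 2.1 of Wolkowicz and Lu \cite{wl} for the constant-yield case; your argument --- positivity of $x_i$ by the integrating factor, positivity of $S$ from $S'|_{S=0}=DS^0>0$, the comparison bound $S'\le D(S^0-S)$ giving $\limsup_{t\to\infty}S(t)\le S^0$, forward invariance of $\{S<S^0\}$ from $S'|_{S=S^0}=-\sum_j f_j(S^0)x_j<0$, and the exclusion of trajectories lingering above $S^0$ via the exponential growth of $x_i$ forced by $\lambda_i<S^0<\mu_i$, contradicting boundedness --- is precisely the standard argument used there, transplanted to the variable-yield setting where it goes through verbatim since only the signs of $f_j$ and $p_i-D_i$ are used.
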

\begin{proof}
The proof is similar
to the proof of Lemma 2.1 in \cite{wl} obtained for the model (\ref{eqsxi}) in the case where the yields are constant.
\end{proof}

\begin{theorem}\label{ourthm}
Assume that 
\begin{enumerate}
\item 
$\lambda_1<\lambda_2\leq\cdots\leq\lambda_n$, and $\lambda_1<S^0<\mu_1$.
\item 
There exist constants
$\alpha_i>0$ for each $i\geq 2$ satisfying $\lambda_i<S^0$ such that
\begin{equation}\label{conditionci}
\max_{0<S<\lambda_1}g_i(S)\leq \alpha_i\leq \min_{\lambda_i<S<\rho_i}g_i(S),\qquad i\geq 2.
\end{equation}
where 
\begin{equation}\label{gi}
g_i(S)=\frac{f_i(S)}{f_1(\lambda_1)}\frac{p_1(S)-D_1}{p_i(S)-D_i}\frac{S^0-\lambda_1}{S^0-S}.
\end{equation}
\item 
The function 
\begin{equation}\label{F}
F(S)=\displaystyle\frac{f_1(S)}{S^0-S}.
\end{equation} 
satisfies  
\begin{equation}\label{conditioni}
 F(S)<F(\lambda_1)\mbox{ if }S\in]0,\lambda_1[,
\qquad
 F(S)>F(\lambda_1)\mbox{ if }S\in]\lambda_1,S^0[.
\end{equation} 
\end{enumerate}
Then the equilibrium $E_{1}^*$ is globally asymptotically 
stable for system (\ref{eqsxi}) with respect to the interior of the positive cone.
\end{theorem}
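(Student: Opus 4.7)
The plan is to adapt the Lyapunov function (\ref{HSU}) of Hsu to the variable-yield case by inserting a weight into the substrate integral. Specifically, I would consider
\[
V = \int_{\lambda_1}^{S}\phi(\sigma)\,d\sigma + \int_{x_1^*}^{x_1}\frac{\xi-x_1^*}{\xi}\,d\xi + \sum_{i=2}^{n}\alpha_i x_i, \qquad \phi(\sigma) = \frac{(p_1(\sigma)-D_1)(S^0-\lambda_1)}{f_1(\lambda_1)(S^0-\sigma)}.
\]
The weight $\phi$ vanishes at $\sigma=\lambda_1$ and has the sign of $p_1(\sigma)-D_1$ on $(0,S^0)$, so the $S$-integral is nonnegative and vanishes only at $S=\lambda_1$; combined with the Hsu-type $x_1$-integral (nonnegative, equal to $0$ only at $x_1=x_1^*$) and the linear terms $\alpha_i x_i$, this will yield $V\ge 0$ in the positive cone with equality exactly at $E_1^*$. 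The crucial algebraic feature of this particular $\phi$ is the identity $\phi(S)D(S^0-S) = x_1^*(p_1(S)-D_1)$, which forces the $x_j$-independent part of $V'$ to cancel identically.

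A direct differentiation of $V$ along (\ref{eqsxi}), using the identity above, should yield
\[
V' = \frac{x_1(p_1(S)-D_1)\bigl(F(\lambda_1)-F(S)\bigr)}{F(\lambda_1)} + \sum_{i=2}^{n} x_i (p_i(S)-D_i)\bigl[\alpha_i - g_i(S)\bigr].
\]
The first summand will be $\le 0$ on $(0,S^0)$ by condition (\ref{conditioni}): both $p_1-D_1$ and $F(\lambda_1)-F(S)$ change sign at $S=\lambda_1$ but with opposite orientations, so their product is nonpositive throughout, vanishing only at $S=\lambda_1$. For each $i\ge 2$ with $\lambda_i<S^0$, I would establish nonpositivity of the $i$th summand by a sign analysis on four subintervals of $(0,S^0)$: on $(0,\lambda_1]$, $p_i<D_i$ and the left inequality of (\ref{conditionci}) gives $\alpha_i\ge g_i(S)$; on $(\lambda_i,\rho_i)$, $p_i>D_i$ and the right inequality gives $\alpha_i\le g_i(S)$; and on the ``gap'' intervals $(\lambda_1,\lambda_i)$ and (where relevant) $(\rho_i,S^0)$, $p_1-D_1>0$ while $p_i-D_i<0$, so $g_i(S)<0<\alpha_i$. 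Species with $\lambda_i\ge S^0$ are handled separately: $p_i(S)-D_i<0$ on $\{S\le S^0\}$ drives $x_i(t)\to 0$ exponentially once Lemma \ref{SppS0} puts $S$ below $S^0$.

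LaSalle's invariance principle then applies: solutions are bounded by Lemma \ref{SppS0}, and $V$ is well-defined and nonnegative on the forward-attracting region $\{S<S^0\}$ (boundedness of $V$ along trajectories moreover keeps $S(t)$ bounded away from $S^0$, since $\phi$ blows up there). Every trajectory therefore approaches the largest invariant set $M\subset\{V'=0\}$. On $M$, with $x_1>0$ guaranteed by interior initial data, vanishing of the first summand forces $S\equiv\lambda_1$: condition (\ref{conditioni}) makes $F$ strictly monotone across $\lambda_1$, and $p_1-D_1$ has no other zero in $(0,S^0)$ because $S^0<\mu_1$. The equations $x_i'=(p_i(\lambda_1)-D_i)x_i$ with $p_i(\lambda_1)<D_i$ together with invariance then force $x_i\equiv 0$ for $i\ge 2$, and $S'=0$ with $S=\lambda_1$ yields $x_1=D(S^0-\lambda_1)/f_1(\lambda_1)=x_1^*$. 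Hence $M=\{E_1^*\}$. The main obstacle is the algebraic calibration of $\phi$ so that the coefficient of $x_i(p_i-D_i)$ in $V'$ lines up exactly with $g_i(S)$ as defined in (\ref{gi}); without the factor $(S^0-\lambda_1)/(S^0-\sigma)$, one would obtain $g_i(S)\cdot F(\lambda_1)/F(S)$ instead, and hypothesis (\ref{conditionci}) would not suffice.
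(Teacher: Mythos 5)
Your proposal is correct and follows essentially the same route as the paper: the identical Lyapunov function $V=\int_{\lambda_1}^{S}\frac{(p_1(\sigma)-D_1)(S^0-\lambda_1)}{f_1(\lambda_1)(S^0-\sigma)}\,d\sigma+\int_{x_1^*}^{x_1}\frac{\xi-x_1^*}{\xi}\,d\xi+\sum_{i\ge 2}\alpha_i x_i$, the same expression for $V'$ with the first term controlled by condition (\ref{conditioni}) and the terms $x_i(p_i(S)-D_i)[\alpha_i-g_i(S)]$ controlled by the casewise sign analysis under (\ref{conditionci}), followed by LaSalle's invariance principle. Your added remarks (the cancellation identity $\phi(S)D(S^0-S)=x_1^*(p_1(S)-D_1)$ and the explicit identification of the largest invariant set) only make explicit what the paper delegates to the proof of Theorem 2.3 in \cite{wl}.
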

\begin{proof}
From Lemma \ref{SppS0} it follows that there is no loss of generality in restricting our attention to $0\leq S<S^0$.
Consider the function $V=V(S,x_1,\cdots,x_n)$ as follows
\begin{equation}\label{lyapunov}
V=
\int_{\lambda_1}^S\frac{(p_1(\sigma)-D_1)(S^0-\lambda_1)}{f_1(\lambda_1)(S^0-\sigma)}d\sigma+
\int_{x_{1}^*}^{x_1}\frac{\xi-x_{1}^*}{\xi}d\xi+
\sum_{i=2}^n\alpha_ix_i
\end{equation}
where $\alpha_i$ are the positive constants satisfying (\ref{conditionci}). The function $V$ is continuously 
differentiable for $0<S<S^0$ and $x_i>0$ and positive except at the point $E_{1}^*$. 
The derivative of $V$ along the trajectories of (\ref{eqsxi}) is given by
$$
V'=x_1[p_1(S)-D_1]\frac{F(\lambda_1)-F(S)}{F(\lambda_1)}+\sum_{i=2}^nx_ih_i(S)
$$
where 
$$h_i(S)=[p_i(S)-D_i][\alpha_i-g_i(S)]$$ 
and $g_i(S)$ and $F(S)$ are given by (\ref{gi}) and (\ref{F}) respectively.
First, note that, using (\ref{condition}) and (\ref{conditioni}), the first term of the above sum is always 
non-positive
for $0<S<S^0$ and equals 0 for $S\in]0,S^0[$ if and only if $S=\lambda_1$ or $x_1=0$. 
If $S\in[\lambda_1,\lambda_i]$ then $p_i(S)<D_i$ and $p_1(S)>D_1$ so that $g_i(S)<0<\alpha_i$ for any choice of $\alpha_i>0$.
Similarly if $\mu_i<S^0$ and 
$S\in[\mu_i,S^0]$ then $p_i(S)<D_i$ and $p_1(S)>D_1$ so that $g_i(S)<0<\alpha_i$ for any choice of $\alpha_i>0$.
On the other hand, if 
$S\in[0,\lambda_1]$ then $p_i(S)<D_i$ and, using (\ref{conditionci}), $g_i(S)\leq\alpha_i$ so that $h_i(S)<0$. 
Finally, if $S\in[\lambda_i,\rho_i]$ then $p_i(S)>D_i$ and $g_i(S)\geq\alpha_i$ so that $h_i(S)<0$.
Thus $h_i(S)<0$ for every $S\in]0,S^0[$, provided that the numbers $\alpha_i$ satisfy (\ref{conditionci}).
Hence $V'\leq 0$ and
$V'=0$ if and only if $x_i=0$ for $i=1\cdots n$ or 
$S=\lambda_1$ and $x_i=0$ for $i=2\cdots n$.
We use LaSalle theorem 
(the details are as in the proof of Theorem 2.3 in \cite{wl}), 
the $\omega$-limit set of the trajectory is $E_1^*$.
\end{proof}

Condition (\ref{conditioni}) is satisfied in the particular case where  
$F'(S)>0$ for $0<S<S^0$.  
For $S=\lambda_1$ we obtain the condition
(\ref{locexpstab}) of local exponential stability of $E_{1}^*$.

\section{Applications}
In this section we show how Theorem \ref{ourthm} extends results in \cite{apw,SM,wl}.

\subsection{One species}
In the case $n=1$ the equations take the form
\begin{equation}\label{eqsx}
\begin{array}{l} 
S' = D(S^0-S)-x_1f_1(S)\\
x_1'= [p_1(S) - D_1]x_1
\end{array}
\end{equation} 
\begin{corollary}\label{thmapw}
If 
$\lambda_1<S^0<\mu_1$ and 
$
1-\frac{f_1(S)(S^0-\lambda_1)}{f_1(\lambda_1)(S^0-S)}
$
has exactly one sign change for $S\in(0,S^0)$ then $E_1^*$ is globally asymptotically stable
with respect to the interior of the positive quadrant.
\end{corollary}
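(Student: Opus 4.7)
The plan is to deduce the corollary as the $n=1$ specialization of Theorem~\ref{ourthm}. The first step is to translate the hypothesis into a statement about the function $F$ defined in (\ref{F}). Since $F(\lambda_1)=f_1(\lambda_1)/(S^0-\lambda_1)$, one immediately checks that
$$
1-\frac{f_1(S)(S^0-\lambda_1)}{f_1(\lambda_1)(S^0-S)}=1-\frac{F(S)}{F(\lambda_1)}=\frac{F(\lambda_1)-F(S)}{F(\lambda_1)},
$$
so the hypothesis says that $F-F(\lambda_1)$ has exactly one sign change on $(0,S^0)$, while of course vanishing at $S=\lambda_1$.

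Next I would pin down the direction of that sign change using the boundary behavior of $F$. The standing assumption $f_1(0)=0$ yields $F(S)\to 0$ as $S\to 0^+$, so $F(S)<F(\lambda_1)$ near $0$. On the other hand $f_1(S^0)>0$ and $S^0-S\to 0^+$ as $S\to (S^0)^-$ give $F(S)\to +\infty$, so $F(S)>F(\lambda_1)$ near $S^0$. A continuous function with these two endpoint signs must change sign on $(0,S^0)$; combined with the ``exactly one sign change'' hypothesis and with $F(\lambda_1)-F(\lambda_1)=0$, the unique crossing point can only be $\lambda_1$, and the sign pattern must be
$$
F(S)<F(\lambda_1)\text{ on }(0,\lambda_1),\qquad F(S)>F(\lambda_1)\text{ on }(\lambda_1,S^0).
$$
This is precisely condition (\ref{conditioni}) of Theorem~\ref{ourthm}.

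The last step is to invoke Theorem~\ref{ourthm} with $n=1$. The portion $\lambda_1<\lambda_2\leq\cdots\leq\lambda_n$ of hypothesis (i) is vacuous, leaving only $\lambda_1<S^0<\mu_1$, which is assumed. Hypothesis (ii) is indexed over $i\geq 2$ and is therefore also vacuous; note that the Lyapunov function (\ref{lyapunov}) collapses to its first two terms and the constants $\alpha_i$ never have to be chosen. Theorem~\ref{ourthm} then delivers global asymptotic stability of $E_1^*$ with respect to the interior of the positive quadrant.

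The only delicate point is the second paragraph: one must rule out interpretations in which the single sign change of $F-F(\lambda_1)$ occurs at some point other than $\lambda_1$ (with $\lambda_1$ merely being a tangential zero). This is precisely what the boundary values $F(0^+)=0$ and $F((S^0)^-)=+\infty$ are used for, since together with continuity they force the sign pattern compatible with (\ref{conditioni}). Everything else is routine bookkeeping.
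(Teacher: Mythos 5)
Your proposal is correct and takes essentially the same route as the paper: specialize Theorem~\ref{ourthm} to $n=1$, note that hypothesis (2) there is vacuous, and identify the sign-change hypothesis with condition (\ref{conditioni}). The paper merely asserts that equivalence in one line; your use of the boundary values $F(0^+)=0$ and $F((S^0)^-)=+\infty$ to force the unique crossing to occur at $\lambda_1$ with the correct orientation supplies the justification the paper omits (modulo the reading of ``sign change'' that you yourself flag, which is the intended one in \cite{apw}).
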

\begin{proof}
The result follows from Theorem \ref{ourthm} since the hypothesis on the change of sign is  
is equivalent to (\ref{conditioni}). 
In the case
when $n=1$ the condition (\ref{conditionci}) is obviously satisfied.
\end{proof}
Corollary \ref{thmapw} was obtained by Arino, 
Pilyugin and Wolkowicz (see \cite{apw}, Theorem 2.11). These 
authors used the following Lyapunov function
$$V_{APW}=\int_{\lambda_1}^S\frac{(p_1(\sigma)-D_1)(S^0-\lambda_1)}{f_1(\lambda_1)(S^0-\sigma)}d\sigma+
\int_{x^*_1}^{x_1}\frac{\xi-x^*_1}{\xi}d\xi.$$
Notice that the Lyapunov function (\ref{lyapunov}) we use reduces to the function $V_{APW}$ in the case when $n=1$.

\subsection{Constant yields}
In the case when the yields are constant, $y_i(S)=Y_i$, the equations take the form
\begin{equation}\label{constyields}
\begin{array}{lcll} 
 \displaystyle S'& =&D[S^0-S]-
\sum_{i=1}^n\frac{p_i(S)}{Y_i}x_i&\\[2mm]
 \displaystyle x'_i&=& [p_i(S) - D_i]x_i,& i=1\cdots n,
\end{array}
\end{equation}
\begin{corollary}\label{thmwl}
Assume that 
\begin{enumerate}
\item 
$\lambda_1<\lambda_2\leq\cdots\leq\lambda_n$, and $\lambda_1<S^0<\mu_1$.
\item 
There exist constants
$\alpha_i>0$ for each $i\geq 2$ satisfying $\lambda_i<S^0$ such that
\begin{equation}\label{conditionciWL}
\max_{0<S<\lambda_1}g_i^{WL}(S)\leq \alpha_i\leq \min_{\lambda_i<S<\rho_i}g_i^{WL}(S),\qquad i\geq 2.
\end{equation}
where 
\begin{equation}\label{giWL}
g_i^{WL}(S)=\frac{p_i(S)}{D_1}\frac{p_1(S)-D_1}{p_i(S)-D_i}\frac{S^0-\lambda_1}{S^0-S}.
\end{equation}
\end{enumerate}
Then the equilibrium $E_{1}^*$ is globally asymptotically 
stable for system (\ref{constyields}) with respect to the interior of the positive cone.
\end{corollary}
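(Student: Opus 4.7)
The plan is to deduce Corollary~\ref{thmwl} as an immediate specialization of Theorem~\ref{ourthm} under the identification $f_i(S):=p_i(S)/Y_i$, which turns system (\ref{eqsxi}) into (\ref{constyields}). The first step is to check that the conditions (\ref{conditionciWL})--(\ref{giWL}) of the corollary are really conditions (\ref{conditionci})--(\ref{gi}) of the theorem in disguise. Since $f_1(\lambda_1)=p_1(\lambda_1)/Y_1=D_1/Y_1$, a direct substitution gives
$$g_i(S)=\frac{p_i(S)/Y_i}{D_1/Y_1}\cdot\frac{p_1(S)-D_1}{p_i(S)-D_i}\cdot\frac{S^0-\lambda_1}{S^0-S}=\frac{Y_1}{Y_i}\,g_i^{WL}(S),$$
so (\ref{conditionciWL}) with constants $\alpha_i>0$ is equivalent to (\ref{conditionci}) with the (still positive) constants $\alpha_i':=(Y_1/Y_i)\alpha_i$.

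The key observation---and the only non-routine point---is that hypothesis~(3) of Theorem~\ref{ourthm}, which does not appear in the statement of Corollary~\ref{thmwl}, comes for free in the constant-yield case. Here $F(S)=p_1(S)/\bigl(Y_1(S^0-S)\bigr)$. For $0<S<\lambda_1$, the Butler--Wolkowicz hypotheses give $p_1(S)<D_1=p_1(\lambda_1)$, while $0<S^0-\lambda_1<S^0-S$ yields $1/(S^0-S)<1/(S^0-\lambda_1)$; multiplying these two inequalities produces $F(S)<F(\lambda_1)$. For $\lambda_1<S<S^0$, one has $p_1(S)>D_1$ (since $S^0<\mu_1$) and $0<S^0-S<S^0-\lambda_1$, and the same multiplication gives $F(S)>F(\lambda_1)$. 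This is precisely (\ref{conditioni}).

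With all three hypotheses of Theorem~\ref{ourthm} now verified for system (\ref{constyields}), the global asymptotic stability of $E_1^*$ follows immediately. I do not anticipate any further difficulty: the entire content of the corollary is the recognition that the extra structure imposed by constant yields makes the monotonicity-type condition on $F$ superfluous, whereas the competition condition (\ref{conditionciWL}) is merely a positive rescaling of (\ref{conditionci}).
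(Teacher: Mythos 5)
Your proposal is correct and follows essentially the same route as the paper: reduce to Theorem~\ref{ourthm} via $f_i(S)=p_i(S)/Y_i$, note $g_i(S)=\tfrac{Y_1}{Y_i}g_i^{WL}(S)$ so that (\ref{conditionciWL}) gives (\ref{conditionci}) after rescaling the $\alpha_i$, and observe that (\ref{conditioni}) holds automatically since $F(S)=p_1(S)/(Y_1(S^0-S))$. The only difference is that you spell out the elementary monotonicity argument for (\ref{conditioni}), which the paper leaves implicit.
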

\begin{proof}
In the case when the yields are constant, we have 
$$g_i(S)=\frac{Y_1}{Y_i}g_i^{WL}(S).$$
Hence the conditions (\ref{conditionciWL}) imply the conditions (\ref{conditionci}) with constants $\alpha_i$ replaced by $\alpha_iY_1/Y_i$. 
On the other hand  $F(S)=\frac{p_1(S)}{(S^0-S)Y_1}$. Thus, assumption (\ref{conditioni}) follows from hypothesis $\lambda_1<S^0<\mu_1$. 
The result follows from Theorem \ref{ourthm}.
\end{proof}
Corollary \ref{thmwl} was obtained by Wolkowicz and Lu (see \cite{wl}, Theorem 2.3). These 
authors used the following Lyapunov function
$$
V_{WL}=
\int_{\lambda_1}^S\frac{(p_1(\sigma)-D_1)(S^0-\lambda_1)}{D_1(S^0-\sigma)}d\sigma+
\frac{1}{Y_1}\int_{x_{1}^*}^{x_1}\frac{\xi-x_{1}^*}{\xi}d\xi+
\sum_{i=2}^n\frac{\alpha_i}{Y_i}x_i.
$$
Notice that the Lyapunov function (\ref{lyapunov}) we use satisfies $V=Y_1V_{WL}$
in the case when the yields are constant. 

\subsection{Variable yields}
In \cite{SM}, another Lyapunov function is proposed in the case when the yields are variable,
leading to the following result.
\begin{corollary}\label{thmSM}
Assume that 
\begin{enumerate}
\item 
$\lambda_1<\lambda_2\leq\cdots\leq\lambda_n$, and $\lambda_1<S^0<\mu_1$.
\item 
There exist constants
$\alpha_i>0$ for each $i\geq 2$ such that
\begin{equation}\label{conditionciSM}
\max_{0<S<\lambda_1}g_i^{SM}(S)\leq \alpha_i\leq \min_{\lambda_i<S<\rho_i}g_i^{SM}(S),\qquad i\geq 2.
\end{equation}
where 
\begin{equation}\label{giSM}
g_i^{SM}(S)=\frac{f_i(S)}{f_1(S)}\frac{p_1(S)-D_1}{p_i(S)-D_i}.
\end{equation}
\item 
The function 
$
F(S)=\displaystyle\frac{f_1(S)}{S^0-S}
$ 
satisfies  
\begin{equation}\label{conditioniSM}
 F(S)<F(\lambda_1)\mbox{ if }S\in]0,\lambda_1[,
\qquad
 F(S)>F(\lambda_1)\mbox{ if }S\in]\lambda_1,S^0[.
\end{equation} 
\end{enumerate}
Then the equilibrium $E_{1}^*$ is globally asymptotically 
stable for system (\ref{eqsxi}) with respect to the interior of the positive cone.
\end{corollary}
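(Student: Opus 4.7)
The plan is to derive this corollary from Theorem~\ref{ourthm} rather than reopen the Lyapunov calculation. Comparing the definitions (\ref{gi}) and (\ref{giSM}) gives the key identity
$$g_i(S)=g_i^{SM}(S)\,\frac{f_1(S)(S^0-\lambda_1)}{f_1(\lambda_1)(S^0-S)}=g_i^{SM}(S)\,\frac{F(S)}{F(\lambda_1)},$$
so the passage from the weaker condition (\ref{conditionciSM}) to the condition (\ref{conditionci}) needed by the theorem is controlled entirely by the ratio $F(S)/F(\lambda_1)$, whose behaviour is pinned down precisely by hypothesis (\ref{conditioniSM}).

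The main step will be to check that $g_i^{SM}$ is positive on the two test intervals, since without positivity the monotone comparison under multiplication by $F(S)/F(\lambda_1)$ can fail. On $(0,\lambda_1)$ one has $p_1(S)<D_1$ (because $S<\lambda_1$) and $p_i(S)<D_i$ (because $S<\lambda_1\leq\lambda_i$), so the ratio of the two negative factors is positive and $g_i^{SM}(S)>0$. On $(\lambda_i,\rho_i)\subset(\lambda_1,S^0)\subset(\lambda_1,\mu_1)$ both $p_1(S)-D_1$ and $p_i(S)-D_i$ are positive, so again $g_i^{SM}(S)>0$. With positivity in hand, (\ref{conditioniSM}) yields $F(S)/F(\lambda_1)\in(0,1)$ on $(0,\lambda_1)$, whence $g_i(S)<g_i^{SM}(S)$ there, and $F(S)/F(\lambda_1)>1$ on $(\lambda_i,\rho_i)$, whence $g_i(S)>g_i^{SM}(S)$ there. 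The sandwich (\ref{conditionci}) therefore holds for $g_i$ with the same constants $\alpha_i$ that served in (\ref{conditionciSM}).

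Hypotheses (1) and (3) of the corollary coincide with hypothesis (1) of Theorem~\ref{ourthm} and with condition (\ref{conditioni}) respectively; indices $i$ with $\lambda_i\geq S^0$ leave the interval $(\lambda_i,\rho_i)$ empty and impose no constraint, so any positive $\alpha_i$ is admissible for them. Theorem~\ref{ourthm} then applies verbatim and yields the global asymptotic stability of $E_1^*$. The only step that genuinely requires care is the sign analysis of $g_i^{SM}$ on the two critical intervals; everything else is a direct substitution.
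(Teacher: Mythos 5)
Your proposal is correct and follows essentially the same route as the paper: both rest on the identity $g_i(S)=g_i^{SM}(S)\,F(S)/F(\lambda_1)$ together with (\ref{conditioniSM}) to pass from (\ref{conditionciSM}) to (\ref{conditionci}) and then invoke Theorem~\ref{ourthm}. You merely supply the sign analysis of $g_i^{SM}$ on the two test intervals that the paper leaves implicit behind the phrase ``with appropriate constants $\alpha_i$'', and in doing so you show the same constants $\alpha_i$ in fact work.
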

\begin{proof}
The condition (\ref{conditioniSM}) is the same as the condition (\ref{conditioni}).
On the other hand we have 
$$g_i(S)=\frac{S^0-\lambda_1}{f_1(\lambda_1)}F(S)g_i^{SM}(S).$$
By (\ref{conditionciSM}) and (\ref{conditioniSM}) we have (\ref{conditionci}) with appropriate constants $\alpha_i$. 
The result follows from Theorem \ref{ourthm}.
\end{proof}
Corollary \ref{thmSM} was obtained by Sari and Mazenc (see \cite{SM}, Theorem 2.2). These 
authors used the following Lyapunov function
$$
V_{SM}=
\int_{\lambda_1}^S\frac{p_1(\sigma)-D_1}{f_1(\sigma)}d\sigma+
\int_{x_{1}^*}^{x_1}\frac{\xi-x_{1}^*}{\xi}d\xi+
\sum_{i=2}^n\alpha_ix_i
$$
where $\alpha_i$, $i=2\cdots n$ are positive constants satisfying (\ref{conditionciSM}).
In the case when the response functions are of Michaelis-Menten form 
(\ref{MichaelisMenten})
and the yields are constant the Lyapunov function $V_{SM}$ reduces to the Lyapunov 
function $V_{H}$ (\ref{HSU}) useb by Hsu \cite{hsu}. 
Notice that the Lyapunov function (\ref{lyapunov}) we use is not proportional to the function $V_{SM}$.

\section{Discussion}
In this paper we considered a mathematical model (\ref{eqsxi}) 
of  $n$ species of microorganisms in competion in a chemostat
for a single resource. The model incorporates 
both monotone and non-monotone response functions, distinct removal rates and variable yields.
We demonstrated that the CEP holds for a large class of growth functions and yields.

Even with constant yields, the problem is not yet completely solved: the CEP holds for a large 
class of growth functions \cite{amg,bw,hsu,li,wl,wx} but
an important open question remains: {\em is the CEP
true assuming only that the $f_i$ are monotone with no restriction on the $D_i$ ?}  This 
major open problem remains unresolved after more than thirty years \cite{LLS}.
However, in the case of constant yields numerical simulations 
of model (\ref{eqsxi}) have only displayed 
competitive exclusion. 

In the case where the yields are constant, it is known \cite{bw} 
that the CEP holds provided that $D_i=D$ for all $i$, the set 
$Q=\bigcup_{i\in N}]\lambda_i,\mu_i[$ is connected, and $S^0\in Q$, where $N=\{i:\lambda_i<S^0\}$. 
Wolkowicz and Lu \cite{wl} conjectured that this result can be extented to the case of different removal rates. 
Under condition (\ref{condition}), it is clear that the set 
$Q$ is connected, and $S^0\in Q$. 
The condition $\lambda_1<\lambda_i$ for $i\neq 1$ can be stated without loss of generality, 
by labelling the populations
such that the index $i=1$ corresponds to the lowest break-even concentration,
but the condition $\lambda_1<S^0<\mu_1$ in (\ref{condition}) cannot be stated without loss of generality. 
If $\mu_1<S^0$, it is not possible to show the CEP by the methods that we used.
To the best of our knowledge, in the case of different removal rates and non-monotone response functions, the 
CEP has been proved only under the assumption $S^0<\mu_1$ \cite{li,wl,wx}. 
However, Rapaport and Harmand \cite{RH} considered the case of two populations and 
proposed conditions on the growth functions such that the CEP holds under the condition $\mu_1<S^0$. 
It should be interesting to extend their methods to
more general cases. We leave this problem for future investigations.

Our result concern also
the case of variable yields, for which it is known \cite{apw,HZC,PW} that more exotic dynamical behaviours, including limit cycles, are possible. Thus in the case of variable yields, it is of great importance to have criteria ensuring the global convergence to an equilibrium with at most one surviving species.


\begin{thebibliography}{10}
\bibitem{apw}
  {\sc J. Arino, S.S. Pilyugin, G.S.K. Wolkowicz},
  \emph{Considerations on yield, nutrient uptake, cellular growth, and competition in chemostat models}.
  Canadian Applied Mathematics Quarterly 11, 2 (2003), 107-142.

\bibitem{amg}
  {\sc R.A. Armstrong, R. McGehee},
  \emph{Competitive exclusion}.
  Amer. Natur., 115 (1980), 151-170.
\bibitem{bw}
  {\sc G.J. Butler, G.S.K. Wolkowicz},
  \emph{A mathematical model of the chemostat with a general class of functions 
describing nutrient uptake}.
  SIAM Journal on Applied Mathematics 45 (1985), 138-151.
\bibitem{hsu}
  {\sc S.B. Hsu},
  \emph{Limiting behavior for competing species}.
  SIAM Journal on Applied Mathematics 34 (1978), 760-763.
\bibitem{hhw}
  {\sc S.B. Hsu, S.P. Hubbell, P. Waltman},
  \emph{A mathematical theory for single nutrient competition in continuous culture of micro-organisms}.
  SIAM Journal on Applied Mathematics 32 (1977), 366-383.
\bibitem{HZC}
  {\sc X. Huang, L. Zhu, E.H.C. Chang},
  \emph{Limit cycles in a chemostat with variable yields and growth rates}.
  Nonlinear Analysis, Real World Applications, 8, (2007), 165-173.
\bibitem{LLS}
  {\sc P. de Leenheer, B. Li, H.L. Smith},
  \emph{Competition in the chemostat: some remarks}.  
Can. Appl. Math. Q.  11  (2003),  no. 3, 229-248.
\bibitem{li}
  {\sc B. Li},
  \emph{Global asymptotic behavior of the chemostat : 
general response functions and differential removal rates}.
  SIAM Journal on Applied Mathematics 59 (1998), 411-422.
\bibitem{monod}
{\sc J. Monod},
\emph{La technique de culture continue. Th\'eorie et applications}. 
Ann. Inst. Pasteur, 79 (1950), 390-410
\bibitem{PW}
 {\sc S.S. Pilyugin, P. Waltman},
  \emph{Multiple limit cycles in the chemostat with variable yields}.
  Mathematical Biosciences 182, (2003), 151-166.
\bibitem{RH}
 {\sc A. Rapaport, J. Harmand},
  \emph{Biological control of the chemostat with nonmonotone response and different removal rates}.
  Mathematical Biosciences and Engineering 5, no. 3 (2008), 539-547.
\bibitem{SM}
 {\sc T. Sari, F. Mazenc},
  \emph{Global dynamics of the chemostat with different removal rates and variable yields} (2009).
  http://hal.archives-ouvertes.fr/hal-00418676/fr/
\bibitem{chem}
 {\sc H.L. Smith, P. Waltman},
  \emph{The Theory of the Chemostat, Dynamics of Microbial Competition}.
  Cambridge University Press, 1995.
\bibitem{wl}
 {\sc G.S.K. Wolkowicz, Z. Lu},
  \emph{Global dynamics of a mathematical model of competition in the chemostat : 
general response functions and differential death rates}.
  SIAM Journal on Applied Mathematics 52 (1992), 222-233.
\bibitem{wx}
 {\sc G.S.K. Wolkowicz, H. Xia},
  \emph{Global asymptotic behavior of a chemostat model with discrete delays}.
  SIAM Journal on Applied Mathematics 57 (1997), 1019-1043.

\end{thebibliography}
\end{document}